\newtheorem{theorem}{Theorem}[section]
\newtheorem{lemma}[theorem]{Lemma}
\newtheorem{proposition}[theorem]{Proposition}
\theoremstyle{definition}
\newtheorem{definition}[theorem]{Definition}
\newcommand{\ER}{Erd\H{o}s-R\'enyi\ }
\newcommand{\HD}[1]{\textcolor{cyan}{[$\mathsf{HD}$:\ #1]}}
\newcommand{\llb}{\llbracket}
\newcommand{\rrb}{\rrbracket}
\newcommand{\pa}{\operatorname{P\!A}_{n}^{(m,\delta)}}
\theoremstyle{definition}
\numberwithin{equation}{section}
\title{Asymptotic diameter of preferential attachment model}
\author{Hang Du\thanks{Department of Mathematics, Massachusetts Institute of Technology} \and Shuyang Gong\thanks{School of Mathematical Sciences, Peking University} \and Zhangsong Li\footnotemark[2] \and Haodong Zhu\thanks{Department of Mathematics and Computer Science, Eindhoven University of Technology}}
\newcommand{\PA}{\operatorname{P\!A}}
\date{}
\begin{document}
	
\maketitle

\begin{abstract}
We study the asymptotic diameter of the preferential attachment model $\PA_n^{(m,\delta)}$ with parameters $m \ge 2$ and $\delta > 0$. Building on the recent work \cite{VZ25}, we prove that the diameter of $G_n \sim \PA_n^{(m,\delta)}$ is $(1+o(1))\log_\nu n$ with high probability, where $\nu$ is the exponential growth rate of the local weak limit of $G_n$. Our result confirms the conjecture in \cite{VZ25} and closes the remaining gap in understanding the asymptotic diameter of preferential attachment graphs with general parameters $m \ge 1$ and $\delta >-m$. Our proof follows a general recipe that relates the diameter of a random graph to its typical distance, which we expect to have applicability in a broader range of models.
\end{abstract}

\section{Introduction}
The \emph{preferential attachment model} is one of the mostly studied randomly growing network models. Given a parameter $m \in \mathbb{N}$, a preferential attachment graph on the vertex set $\{v_1, \dots, v_n\}$ with $m$ attachments is generated via the following iterative process: for each $t \ge 2$, the new vertex $v_t$ connects $m$ (not necessarily distinct) edges to vertices in the existing graph on $\{v_1, \dots, v_{t-1}\}$. Each endpoint of these edges is chosen independently, according to a probability distribution that favors vertices of higher degree in the current graph. In the classical setting, which is also the focus of this paper, the attachment probability is taken to be proportional to an \emph{affine function} of the degree, parameterized by $\delta > -m$ (see Definition~\ref{def-preferential-attachment-model} for the precise formulation). We denote by $\PA_n^{(m, \delta)}$ the distribution of the resulting random graph on $n$ vertices under this model. 

Our main result provides an asymptotic characterization of the diameter (i.e., the maximal distance between vertex pairs) of $G_n \sim \PA_n^{(m,\delta)}$, where the parameters satisfy $m \ge 2$ and $\delta > 0$.

\begin{theorem}\label{thm-main}
    Fix any $m \ge 2$ and $\delta > 0$. Then for $G_n \sim \PA_{n}^{(m,\delta)}$, it holds that
    \[
    \frac{\operatorname{diam}(G_n)}{\log_\nu n} \stackrel{\text{in probability}}{\longrightarrow} 1 \,, \quad \text{as } n \to \infty \,,
    \]
    where $\nu$ is the exponential growth rate of the local weak limit of $\PA_n^{(m,\delta)}$, as defined in~\eqref{eq-nu} below.
\end{theorem}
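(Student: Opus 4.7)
The plan is to establish matching lower and upper bounds of $(1+o(1))\log_\nu n$ on $\operatorname{diam}(G_n)$. The lower bound is immediate from the typical-distance result: since $d_{G_n}(U,V)/\log_\nu n \to 1$ in probability for uniformly chosen $U,V \in [n]$, and $\operatorname{diam}(G_n) \ge d_{G_n}(U,V)$ pointwise, one obtains $\operatorname{diam}(G_n) \ge (1-o(1))\log_\nu n$ with high probability.

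For the upper bound, I would implement the paper's ``general recipe'' in two steps. Fix $\varepsilon > 0$, set $k = \tfrac{1}{2}(1+\varepsilon)\log_\nu n$, and call a vertex $w$ \emph{good} if $|B_k(w)| \ge n^{1/2+\varepsilon/3}$. In \emph{Step A}, I would show that every $v \in [n]$ lies within graph-distance $o(\log n)$ of some good vertex, thereby bounding the cost of escaping atypical local structure. In \emph{Step B}, I would show that for any two good vertices $u,v$, the balls $B_k(u)$ and $B_k(v)$ intersect except with probability $o(n^{-2})$; a union bound over all pairs then yields $d_{G_n}(u,v) \le 2k = (1+\varepsilon)\log_\nu n$ uniformly over good pairs. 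Combining Steps~A and B gives $\operatorname{diam}(G_n) \le (1+\varepsilon)\log_\nu n + o(\log n)$, and sending $\varepsilon \to 0$ afterwards yields the conclusion. Step~B would be driven by a sprinkling-type estimate: once both balls are polynomially large, the freshness of edges attached by late-born vertices forces their intersection; here one would leverage the P\'olya urn / P\'olya point tree representation of PA to couple the boundary expansion to a supercritical multi-type branching process of mean $\nu$, and then carry out a birthday-style intersection calculation to achieve the $o(n^{-2})$ tail.

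The principal technical difficulty is Step~A: producing a uniform-in-$v$ lower bound on neighborhood growth. Unlike in the configuration or Chung--Lu models where edges are effectively independent, PA edges are strongly correlated through the dynamically evolving degree sequence, and late-born vertices can have degree exactly $m$, so their neighborhoods may look ``thin'' for many exploration steps. I would handle this by a two-stage exploration: first, follow the oldest parents of $v$ for $o(\log n)$ steps to reach an ``early'' vertex of high degree (a hub), using tail bounds on the ancestral arrival-time sequence derived from the P\'olya urn; second, expand the hub's neighborhood via the branching-process approximation to certify goodness. Controlling the hitting time to a hub uniformly over all start vertices is the crux of the argument, and is precisely where the assumption $m \ge 2,\ \delta > 0$ is critically used: $m \ge 2$ ensures that ancestral walks do not collapse into a single path, and $\delta > 0$ ensures that the offspring mean of the limiting branching process equals $\nu$ rather than being infinite.
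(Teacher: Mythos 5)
Your high-level plan---escape from a possibly thin neighborhood to a well-connected vertex, then stitch well-connected vertices together via sprinkling---matches the ``general recipe'' the paper describes in Section~1.2. But the concrete version you set up, especially Step~B, diverges from the paper's argument in a way that makes it substantially harder, and I think there is a genuine gap hiding in it.

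Your Step~B asks for a uniform $o(n^{-2})$ tail bound on the event that the $k$-balls of two ``good'' vertices intersect, with $k \approx \tfrac12 \log_\nu n$ and balls of size $n^{1/2+\varepsilon/3}$. That is \emph{not} a consequence of Proposition~\ref{prop-typical-distance}, which only controls the median/typical distance, nor is it easily extracted from the P\'olya point tree coupling: those couplings degrade in quality as the neighborhood grows, and the two BFS explorations in a fixed PA graph are strongly correlated through the evolving degree sequence, so a ``birthday-style'' independence heuristic does not directly apply. In effect, your Step~B amounts to re-proving \cite{VZ25} with a uniform polynomial tail, which is at least as hard as the original result. Similarly, your Step~A demands a uniform-over-$v$ lower bound showing balls reach polynomial size after $\tfrac12(1+\varepsilon)\log_\nu n$ steps, which is essentially a uniform branching lower bound---again roughly the difficulty of the typical-distance analysis itself.

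The paper sidesteps both issues. For the escape step (Lemma~\ref{lem-size-of-neighborhood}) it only shows that every vertex has an $O((\log n)^{2/3})$-neighborhood of size $\omega(\log n)$ (in fact $(\log n)^4$), which is a far weaker and much easier statement than polynomial growth, achieved by a dichotomy: either the ancestral exploration hits the early vertices $\llbracket 1, e^{10\sqrt{\log n}}\rrbracket$ (whose induced graph has diameter $O(\sqrt{\log n})$ by \cite{vdHof24}), or the BFS expands nearly maximally and quickly reaches $(\log n)^4$ vertices. For the stitching step, instead of trying to intersect two balls directly, the paper treats Proposition~\ref{prop-typical-distance} as a black box: it defines a vertex $w \in \llbracket 1, n-2\mathtt K_n\rrbracket$ to be ``typical'' when at least $n/10$ vertices lie within distance $M_n$ of it in $G_{n-2\mathtt K_n}$, deduces from the typical-distance result (Lemma~\ref{lem-mathcal-G-is-typical}) that at least $n/10$ vertices are typical, and then lets the $\Theta(n/\log n)$ fresh vertices attach: each has probability $\Omega((\log n)^3/n)$ of simultaneously touching $u$'s $\mathtt L_n$-neighborhood and the typical set, giving a failure probability $\le n^{-3}$ that trivially survives a union bound (Lemma~\ref{lem-dist-small-vert}). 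This is why the paper gets away with only $(\log n)^4$-size neighborhoods: the sprinkling needs merely $\omega(\log n)$ targets to pick up a.a.s., not $n^{1/2+\varepsilon}$. You should reorganize your Step~B around this black-box use of the typical-distance result rather than attempt a uniform tail bound on ball intersection, and correspondingly weaken Step~A to poly-logarithmic ball sizes.
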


Theorem~\ref{thm-main} confirms a recent conjecture posed in~\cite{VZ25}, and closes the remaining gap in the asymptotic understanding of the diameter of $G_n \sim \PA_n^{(m,\delta)}$ for general parameters $m \ge 1$ and $\delta > -m$ (see Section~\ref{subsec-background} for further backgrounds).

\subsection{Backgrounds and Related work}\label{subsec-background}

The \emph{preferential attachment model}, originally introduced in~\cite{BA99}, is designed to capture the structural properties of many real-world networks. Since its inception, it has found wide-ranging applications in modeling and analyzing diverse types of networked systems. Examples include the World Wide Web~\cite{AH00, KBM13}, scientific collaboration and citation networks~\cite{New01, PGFSM08, Csa06, WYY08}, as well as many other social networks~\cite{capocci2006preferential,de2007preferential}. We refer to \cite[Section~1]{VZ25} for a more comprehensive overview on preferential attachment model and its relevance to various intriguing aspects of real-world networks.

A striking feature commonly observed in the study of large-scale networks is the \emph{small-world phenomenon}, which refers to the empirical observation that the diameter (or typical distance between vertex pairs) remains surprisingly small, often growing only logarithmically with the network size. A well-known example is the ``six degrees of separation'' principle, which posits that any two individuals on Earth are connected by a chain of at most six acquaintances. Motivated by the desire to understand this phenomenon from a theoretical standpoint, considerable attention has been devoted to studying the diameter of random graphs that model real-world structures. For classical random graph models such as the \ER graphs and the random $d$-regular graphs, the asymptotic behavior of the diameter is well understood; see, e.g., \cite{Bol82, CL01, RW10, Shi18} and the references therein. In the case of the preferential attachment model, given its simplicity and broad applicability, understanding the asymptotic behavior of the diameter emerges as a natural and important problem that has attracted significant interest.

The asymptotic diameter of the preferential attachment model has been well understood in certain parameter regimes. When $m = 1$ and $\delta > -1$, it is known from~\cite[Theorem~1]{Pittle94} that the diameter of $G_n \sim \operatorname{PA}^{(1,\delta)}_n$ is typically given by\footnote{In this paper, we adopt the Bachmann-Landau family of notations to characterize the order of approximation.}
\[
(1 + o(1)) \, \tfrac{2(1 + \delta)\log n}{(2 + \delta)\theta} \,,
\]
where $\theta \in (0,1)$ is the solution to $\theta + (1 + \delta)(1 + \log \theta) = 0$. Additionally, when $m \geq 2$ and $-m<\delta<0$, it is shown in~\cite[Theorem~1.3]{CGvdH19} that the diameter of $G_n$ is typically
\[
(1 + o(1)) \left( \tfrac{4}{|\log(1 + \delta/m)|} + \tfrac{2}{\log m} \right) \log\log n \,.
\]
Finally, when $m \geq 2$ and $\delta = 0$, it is shown in~\cite[Theorem~1]{BR09} that a variant (which allows self-loops) of
%\HD{How much does it differ to the one we are considering?}
the preferential attachment model has typical diameter
\[
(1 + o(1)) \, \frac{\log n}{\log\log n} \,.
\]
%\HD{We should make sure we find the original works of the above theorems and cite them.}

Despite the aforementioned advancements in understanding the asymptotic diameter of preferential attachment models, the case of $G_n \sim \PA_n^{(m,\delta)}$ with $m \ge 2$ and $\delta > 0$ has remained open. Prior to our work, the best known result in this regime asserted only that the diameter of $G_n$ is typically $O(\log n)$~\cite[Theorem~8.33]{vdHof24}. On the other hand, a very recent work~\cite{VZ25} by van der Hofstad and the last author establishes that the typical distance (i.e., the graph distance between two uniformly chosen vertices) in $G_n \sim \PA_n^{(m,\delta)}$ is approximately $\log_\nu n$, where
\begin{equation}\label{eq-nu}
\nu = \frac{2m(m + \delta) + 2\sqrt{m(m - 1)(m + \delta)(m + \delta + 1)}}{\delta} > 1
\end{equation}
is the exponential growth rate of the local weak limit of $\PA_n^{(m,\delta)}$, as identified in~\cite{HHR23}. Precisely,~\cite{VZ25} established the following result:

\begin{proposition}[Theorem 1.1, \cite{VZ25}]{\label{prop-typical-distance}}
Fix any $m\ge 2$ and $\delta>0$.
For $G_n\sim \PA_n^{(m,\delta)}$ and $u_n,v_n$ sampled from $V(G_n)$ uniformly and independently at random, it holds that
    \[
    \frac{\operatorname{dist}_{G_n}(u_n,v_n)}{\log_\nu n}\stackrel{\text{in probability}}{\rightarrow}1\,,\quad\text{as }n\to\infty\,.
    \]
\end{proposition}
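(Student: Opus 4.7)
The plan is to prove matching upper and lower bounds showing $\operatorname{dist}_{G_n}(u_n,v_n) = (1+o(1))\log_\nu n$ with high probability, following the general recipe for typical distances in random graphs whose local weak limit has positive exponential growth rate.

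For the \textbf{lower bound}, I would use a first-moment path-counting argument. Fix $\epsilon>0$ and set $k=\lfloor(1-\epsilon)\log_\nu n\rfloor$. In $\PA_n^{(m,\delta)}$, the probability that any prescribed sequence of edges is present factorizes into conditional connection probabilities, each proportional to an affine weight $d+\delta$ at the relevant insertion time. Combined with the sharp asymptotics for mean degrees $\mathbb{E}[d_i(G_n)]\asymp (n/i)^{m/(2m+\delta)}$ (see e.g.\ \cite{vdHof24}), one can estimate the expected number of length-at-most-$k$ paths joining $u_n$ to $v_n$ by summing over all admissible vertex sequences. The per-edge effective multiplier that emerges from this sum is exactly $\nu$---the Perron eigenvalue of the mean offspring operator of the Pólya point tree---so the expectation is of order $\nu^k/n = n^{-\epsilon+o(1)}\to 0$, and Markov's inequality yields $\operatorname{dist}_{G_n}(u_n,v_n)>k$ with high probability.

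For the \textbf{upper bound}, set $r=\lceil\tfrac{1+\epsilon}{2}\log_\nu n\rceil$ and perform simultaneous BFS explorations $B_r(u_n), B_r(v_n)$. Two steps are needed: (i) \emph{Neighborhood growth}: using the local weak convergence of $\PA_n^{(m,\delta)}$ to the Pólya point tree established in \cite{HHR23}, together with a martingale/concentration argument for generation sizes, one shows $|B_r(u_n)|, |B_r(v_n)| \ge n^{1/2+\epsilon/3}$ with high probability; (ii) \emph{Collision}: conditional on two such large disjoint explored sets, the PA connection formula implies that the expected number of $G_n$-edges joining them is polynomially large in $n$, so a second-moment argument produces at least one such edge with high probability. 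Combined, these yield $\operatorname{dist}_{G_n}(u_n,v_n)\le 2r+1=(1+\epsilon)\log_\nu n + O(1)$ with high probability.

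The \textbf{main obstacle} is step (i) of the upper bound: local weak convergence is inherently a constant-radius statement, so extending it out to radius $r=\Theta(\log n)$ requires iterating the branching-process coupling through $\Theta(\log n)$ generations while carefully controlling multiplicative errors in the neighborhood size estimates. The strong dependency structure of PA---in which old vertices accumulate high degree and act as hubs, distorting the pure branching-process picture---makes this coupling quite delicate, especially in the finite-variance regime $\tau = 3 + \delta/m > 3$ considered here. Once the controlled neighborhood-growth estimate is in hand, the remaining first-moment and collision arguments are comparatively standard, the central input in each direction being the identification of $\nu$ as the exponential growth rate of the Pólya point tree.
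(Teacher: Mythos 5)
This proposition is not proved in the paper at all: it is quoted verbatim as Theorem~1.1 of \cite{VZ25} and is used purely as a black-box input (together with the lower bound it provides) to upgrade the typical-distance estimate to a diameter bound. There is therefore no internal proof for your sketch to be compared against, and the instruction to ``compare with the paper's own proof'' cannot be carried out here; the only honest comparison would be against \cite{VZ25} itself, which the present paper neither reproduces nor summarizes.

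That said, a few remarks on the sketch in its own right. Your outline---a truncated first-moment path count for the lower bound, and a two-sided BFS exploration plus a collision/second-moment step for the upper bound, with $\nu$ emerging as the Perron eigenvalue of the mean offspring operator of the P\'olya point tree---is a plausible high-level description of the standard approach in this literature, and you correctly identify the crux: extending the local-weak-limit coupling from $O(1)$ generations out to $\Theta(\log n)$ generations, while controlling the multiplicative errors and the hub effect induced by high-degree early vertices. Be aware, though, that this step is genuinely where the whole difficulty lives; writing ``a martingale/concentration argument for generation sizes'' papers over the fact that the exploration in PA is strongly inhomogeneous (the offspring distribution at depth $r$ depends on the ages of the vertices reached so far), so one cannot directly appeal to a single martingale but must track a measure-valued or weight-valued process. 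Your proposal would need that to be fleshed out before it could be counted as a proof, but since Proposition~\ref{prop-typical-distance} is an imported result, this is outside the scope of the paper under review, whose own contribution begins downstream of it.
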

It is further conjectured in \cite{VZ25} that typically, the diameter of $G_n\sim \PA_n^{(m,\delta)}$ is also $(1+o(1))\log_\nu n$ (note that Proposition~\ref{prop-typical-distance} already provides the lower-bound). We provide a proof of the conjectural upper-bound building on Proposition~\ref{prop-typical-distance}.

\subsection{Proof strategy}\label{subsec-intuition}
Our proof builds on a framework that converts a probabilistic upper bound on the typical distance in a random graph into a probabilistic upper bound on its diameter, at the cost of an additional additive term that is typically small.

To be more precise, let $M_n$ be an a.a.s.\ (asymptotically almost surely) upper bound on the median distance of $G_n$, meaning that with high probability over $G_n\sim\pa$,
\[
\mathbb{P}\left[\operatorname{dist}_{G_n}(u_n,v_n)\le M_n\right]\ge \frac{1}{2}\,,
\]
where $\mathbb{P}$ is taken over uniformly chosen vertices $u_n, v_n \in V_n$. The key observation is that if the $r$-neighborhoods of vertices in $G_n$ grow sufficiently rapidly in $r$, then for any pair of vertices $u,v$, with overwhelming probability there exist vertices in their respective small neighborhoods whose distance is at most $M_n$. This in turn implies that the diameter is at most $M_n$ plus a small additive error, provided that we have uniform growth estimates for the neighborhoods of all vertices.

Specializing to the setting of the preferential attachment model, in order to show that the gap between the median distance and the diameter of $G_n \sim \PA_n^{(m,\delta)}$ is $o(\log n)$, it suffices to show that with high probability, there exists some $R_n = o(\log n)$ such that the $R_n$-neighborhood of every vertex in $G_n$ has size $\omega(\log n)$. Assuming this holds, the above heuristics can be made rigorous via a sprinkling argument, yielding an a.a.s.\ upper bound of $M_n + O(R_n)$ on the diameter of $G_n$.

We provide several remarks on this approach. First, it relies only on soft arguments about random graphs and is fairly general. Moreover, it appears to be tight in several senses. On the one hand, for the preferential attachment model, while in Section~\ref{sec-neighborhood-size} we prove that one can take $R_n = O((\log n)^{2/3})$, we in fact expect that $R_n = O(\log\log n)$ suffices. This suggests an $O(\log\log n)$ gap between the typical distance and the diameter of $G_n \sim \PA_n^{(m,\delta)}$, which we believe to be tight by comparison with the behavior of random $d$-regular graphs. On the other hand, we note that for a sparse \ER graph, the gap between the diameter and the typical distance is indeed $\Theta(\log n)$,\footnote{More precisely, for a supercritical sparse \ER graph $\mathcal{G}(n,\lambda/n)$ with $\lambda>1$, a.a.s.\ the typical distance and the diameter of its giant component are $(c_1+o(1))\log n$ and $(c_2+o(1))\log n$, respectively, for two different constants $c_1 < c_2$ depending on $\lambda$.} partly because, with high probability, there exist vertices whose $\Theta(\log n)$-neighborhoods have size only $O(\log n)$ (e.g., leaves of large trees dangling from the giant $2$-core). In view of these observations, we believe that this approach may have broader and further applicability.

\subsection{Precise model definition and notations}\label{subsec-notations}

We now present the precise definition of the preferential attachment model we address. 
%\zh{I have slightly modified the definition here, mainly by adding labels to the edges.}\SG{sorry, where is the modification? It seems that this is the the same definition as before?}
\begin{definition}[Preferential attachment model]\label{def-preferential-attachment-model}
Given $m,n\in \mathbb{N}$, $\delta>-m$, and a set $V_n$ with cardinality $n$, an undirected graph $G_n$ with vertex set $V_n=\{v_1,\dots,v_n\}$ is defined as follows:
\begin{itemize}
% \item We generate the sequence of vertex arrival times, denoted by the bijection $\sigma: V_n \to [n]$, uniformly at random;
\item The initial graph $G_2=(V_2, E_2)$ consists of two vertices $v_1$ and $v_2$ and $m$ multiple edges labeled with $1,2,\ldots,m$ connecting them; 
% \HD{I changed the definition here because we need $\mathsf{deg}(v)\ge m$ to define the probability in (1).}
\item For $3 \le t \le n$, the graph $G_t=(V_t, E_t)$ is obtained by adding to $G_{t-1}$ a new vertex $v_t$ and connecting $m$ edges labeled with $1,2,\ldots,m$ from $v_t$ to vertices in $V_{t-1}$. 
Specifically, we construct a graph sequence $G_{t,0},G_{t,1},\ldots,G_{t,m}$ starting from $G_{t,0}=G_{t-1}$ and ending at $G_{t,m}=G_t$. For $1\leq i \leq m$, the graph $G_{t,i}$ is obtained by adding an edge labeled $i$ from $v_t$ to a vertex $v_{t,i} \in V_{t-1}$ with probability 
\begin{align}\label{eq-prob-def}
    \pa \left[ v_{t,i}=v_k \mid G_{t,i-1} \right] = \frac{D_{v_k}(t,i-1)+\delta}{ \sum_{\ell\leq t-1 } \left(  D_{v_\ell}(t,i-1) +\delta \right) }, \quad \forall 1\le k \leq t-1\,,
\end{align}
where $D_v(t,i)$ is the degree of $v$ in $G_{t,i}$. For simplicity, we write $D_v(t,m)$ as $D_v(t)$.
\end{itemize}
The graph $G_n$ is called a preferential attachment graph and we denote its distribution by $\PA_n^{(m,\delta)}$. It aligns with $\pa(d)$ in \cite{VZ25}.
\end{definition}
We remark that several slightly different definitions of the preferential attachment model exist in the literature; however, the specific version we choose does not substantially affect our results. 
%Indeed, as pointed out in~\cite{VZ25}, Proposition~\ref{prop-typical-distance} holds for all versions of the preferential attachment model, and our argument for Theorem~\ref{thm-main} is also applicable to any of these variants. 
Indeed, the proof of Lemma~\ref{lem-edgeset-cond-prob} is the only place where a specific version of $\pa$ is required, and this can be generalized to other variations (for an intuition behind the proof, see, e.g., \cite[Appendix C]{VZ25}).

%\SG{Haodong, please feel free to introduce any notation you need for the proof here.}
Throughout the remainder of the paper, we call $v_i$ the vertex with label $i$, $1 \le i \le n$, and we use $\llbracket a,b\rrbracket$ to denote the set of vertices with labels in the interval $[a,b]$. We write $\operatorname{N}_r(v)$ for the $r$-neighborhood of a vertex $v$ in $G_n$. Let $\operatorname{N}_r^{\downarrow}(v)$ denote the set of $\le r$-generation ascendants of $v$ in $G_n$ (i.e., $\operatorname{N}_r^{\downarrow}(v)$ includes all the vertices $u$ with labels less than $v$ for which there exists a path $P$ from $u$ to $v$ of length at most $r$ and with increasing labels). For any connected graph $H$ and any vertices $u,v \in V(H)$, we denote $\operatorname{dist}_{H}(u,v)$ to be the graph distance of $u,v$ in $H$. In addition, for any $u \in V(H)$ and $A \subset V(H)$ we denote $\operatorname{dist}_H(u,A)=\min\{ \operatorname{dist}_H(u,v): v \in A \}$.

%\HD{Add the notation about the edges}
%\SG{the definitions here are not useful anymore? possibly we only need to use the $N^{\downarrow}$ type neighborhood? Also, we could delete $\widehat{G}_n$} Moreover, as hinted earlier, we define $\widehat{G}_n$ as the subgraph of $G_n$ obtained by deleting the edges $e_{t,j}$ with $n/2+1\le t\le n$ and $2\le j\le m$, and we let $\widehat{\operatorname{N}}_r(v)$ be the $r$-neighborhood of $v$ in $\widehat{G}_n$. Finally, for any graph $H$, we denote by $\Delta(H)$ its max-degree. 

%\SG{Better to write a math definition here}.

%$\operatorname{N}_r^{\downarrow}(v)$ denote the set of $\le r$-generation ascendants of $v$ in $G_n$ (i.e., $\operatorname{N}_r^{\downarrow}(v)$ includes all the vertices $u$ with labels less than $v$ for which there exists a path $P$ from $u$ to $v$ of length at most $r$ and with increasing labels). 

\section{Uniform growth of the neighborhood size}\label{sec-neighborhood-size}

A key challenge in analyzing the preferential attachment model is the absence of independence across edges and the complexity of handling the resulting correlations. However, throughout our proof, we require only basic conditional probability estimates, as incorporated in the next lemma.

\begin{lemma}\label{lem-edgeset-cond-prob}
    Let $E$ and $E'$ be two sets of potential edges in $G_n\sim \pa$ such that $E\cap E'=\emptyset$. Assume that $V(E')\subset \llb s,n\rrb$, then 
    %\HD{Do we need to assume $V(E)\subset \llb s,n\rrb$ as well?} \zh{No. By the way, I added something in \eqref{eq-conditional-prob-edge-set} and \eqref{eq-conditional-prob-edge}. I think it should be fine?}
    \begin{equation}\label{eq-conditional-prob-edge-set}
    \PA_n^{(m,\delta)}[E'\cap E(G_n)\neq \emptyset\mid E\subset E(G_n)]\le \frac{|E'|(m+\delta+1)+|E|}{(2s-2)m+s\delta}\,.
    \end{equation}
    Specifically, taking $E'=\{e'\}$, we have
    \begin{equation}\label{eq-conditional-prob-edge}
    \PA_n^{(m,\delta)}[e'\in E(G_n)\neq \emptyset\mid E\subset E(G_n)]\le \frac{m+\delta+1+|E|}{(2s-2)m+s\delta}\,.
    \end{equation}
\end{lemma}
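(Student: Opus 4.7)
My plan is to reduce the set-level bound to a sum over single edges via Markov's inequality, and then control each single-edge conditional probability by exploiting the sequential structure of the preferential attachment process.

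First, Markov's inequality yields
\[
\Pr[E'\cap E(G_n)\neq \emptyset \mid E\subset E(G_n)] \le \sum_{e'\in E'}\Pr[e'\in E(G_n)\mid E\subset E(G_n)]\,,
\]
so it suffices to bound the right-hand side by $(|E'|(m+\delta+1)+|E|)/((2s-2)m+s\delta)$. For each labeled edge $e'\in E'$ connecting $v_k$ and $v_\ell$ with $s\le k<\ell$ (say $e'$ is attached at the $i^*$-th slot of $v_\ell$, i.e.\ the event is $v_{\ell,i^*}=v_k$), I would decompose $E$ into its \emph{past} portion $E^{\mathrm{past}}$ (edges at steps strictly before $(\ell,i^*)$) and its \emph{future} portion $E^{\mathrm{future}}$. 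Using $\Pr[E^{\mathrm{future}}\subset E(G_n)\mid \mathcal F_{(\ell,i^*)},v_{\ell,i^*}=v_k]\le 1$ together with the attachment formula \eqref{eq-prob-def}, this gives
\[
\Pr[e'\in E(G_n),E\subset E(G_n)]\;\le\; \mathbb E\!\left[\mathbbm 1\{E^{\mathrm{past}}\subset E(G_{\ell,i^*-1})\}\cdot \frac{D_{v_k}(\ell,i^*-1)+\delta}{\mathrm{denom}_{(\ell,i^*-1)}}\right],
\]
where $\mathrm{denom}_{(\ell,i^*-1)}=2m(\ell-2)+(i^*-1)+(\ell-1)\delta \ge (2s-2)m+s\delta$ since $\ell\ge s+1$.

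To bound the expectation, I would apply the martingale identity
\[
\mathbb E[D_{v_k}(\tau,j)+\delta\mid \mathcal F_{(\tau,j-1)}]=(D_{v_k}(\tau,j-1)+\delta)\bigl(1+1/\mathrm{denom}_{(\tau,j-1)}\bigr),
\]
which telescopes using $\mathrm{denom}_{(\tau,0)}+m\le \mathrm{denom}_{(\tau+1,0)}$ (equivalent to $m+\delta\ge 0$) to yield $\mathbb E[D_{v_k}(\ell,i^*-1)+\delta]\le (m+\delta)\cdot \mathrm{denom}_{(\ell,i^*-1)}/\mathrm{denom}_{(k+1,0)}$. Combined with $k\ge s$, this gives the \emph{unconditional} single-edge bound $\Pr[e'\in E(G_n)]\le(m+\delta)/((2s-2)m+s\delta)$. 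Incorporating the indicator $\mathbbm 1\{E^{\mathrm{past}}\}$ introduces a perturbation: each forced edge in $E^{\mathrm{past}}$ incident to $v_k$ deterministically raises $D_{v_k}(\ell,i^*-1)$ by one on the conditioning event, and constraints whose target is not $v_k$ reshuffle weight away from $v_k$ (giving a nonpositive correction). Summing these perturbations across $e'\in E'$ via a global double-counting argument (exploiting that every $e\in E$ has only two endpoints and thus directly inflates the relevant degree in only a controlled number of summands) should yield a total perturbation of at most $|E|/((2s-2)m+s\delta)$, while the $+1$ slack per $e'$ absorbs lower-order losses from the telescoping inequality.

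The main obstacle is the final aggregation step: showing that the perturbation from conditioning on $E$ scales as $|E|$ rather than $|E'|\cdot|E|$. A naive per-edge union bound would incur the multiplicative factor, so one needs a joint sequential analysis over $E'$ that tracks how each forced attachment in $E$ contributes globally across $E'$ — this is the delicate point, though the slack $+1$ per $e'\in E'$ in the numerator leaves enough room to absorb the bookkeeping error.
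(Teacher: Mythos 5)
Your route is genuinely different from the paper's, but it has two gaps, one of which is structural. The paper does not use any sequential or martingale analysis. It instead quotes the exact closed-form product formula for $\pa[E\subset E(G_n)]$ from \cite{VZ25} (in terms of the counts $p_s^E$ of edges of $E$ targeting vertex $s$ and $q_s^E$ of edges of $E$ straddling time $s$), writes $E_h=E\cup\{e'_h\}$, and evaluates the ratio $\pa[E_h\subset E(G_n)]/\pa[E\subset E(G_n)]$ directly. This ratio telescopes: all factors outside $s\in[j'_h,\ell'_h]$ cancel, the factors with $j'_h<s<\ell'_h$ are $\le 1$, and the single surviving factor at $s=j'_h$ is bounded by $(m+\delta+p_{j'_h}^{E_h})/((2s-2)m+s\delta)$. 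The union bound over $h\in[H']$ then uses the identity $\sum_h p_{j'_h}^{E_h}=\sum_h\sum_r\mathds 1\{j'_h=j_r\}+|E'|$, bounded by $|E|+|E'|$ (which is where the $+1$ per edge of $E'$ in your slack comes from).

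The structural gap in your proposal is the step going from the joint probability to the conditional one. You bound $\Pr[e'\wedge E]$ by dropping the future constraint, obtaining $\Pr[e'\wedge E]\le\Pr[e'\wedge E^{\mathrm{past}}]=\mathbb E[\mathbbm 1\{E^{\mathrm{past}}\}\cdot(D_{v_k}+\delta)/\mathrm{denom}]$, but the quantity you need is $\Pr[e'\mid E]=\Pr[e'\wedge E]/\Pr[E]$, and $\Pr[E]$ is (potentially much) smaller than $\Pr[E^{\mathrm{past}}]$, so this chain of inequalities only controls $\Pr[e'\mid E^{\mathrm{past}}]$, not $\Pr[e'\mid E]$. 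Equivalently, you must control
\[
\Pr[e'\mid E]=\Pr[e'\mid E^{\mathrm{past}}]\cdot\frac{\Pr[E^{\mathrm{future}}\mid e',E^{\mathrm{past}}]}{\Pr[E^{\mathrm{future}}\mid E^{\mathrm{past}}]}\,,
\]
and the last ratio is \emph{not} in general $\le 1$: if $E^{\mathrm{future}}$ contains edges that also target $v_k$, conditioning on $e'$ inflates $v_k$'s degree and makes those future edges \emph{more} likely. The paper's formula handles precisely this positive correlation, because $p^{E_h}_{j'_h}$ in the numerator of the surviving factor counts \emph{all} edges of $E$ targeting $j'_h=v_k$, past or future. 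Your decomposition has no analogue of this cancellation, so even after the bookkeeping is fixed the argument would not yield the lemma as stated (it could plausibly be made to work when $E^{\mathrm{future}}=\emptyset$, which happens to hold in the paper's applications, but the lemma is stated for general $E$). The second, smaller gap is one you flag yourself: the aggregation from a per-edge bound to a total perturbation of order $|E|$ rather than $|E||E'|$ is never actually carried out, whereas in the paper it is the one-line double-counting identity above. Your martingale computation of $\mathbb E[D_{v_k}(\ell,i^*-1)+\delta]$ is fine for the \emph{unconditional} single-edge bound, but it does not by itself give the conditional lemma.
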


\begin{proof}
We write $E=\left\{(\ell_h,i_h,j_h):h\in [H]\right\}$ and $E'=\left\{(\ell_h',i_h',j_h'):h\in [H']\right\}$, where a triple $(\ell,i,j)$ means that there is an edge labeled $i$ between vertices $\ell$ and $j$ with $\ell>j$, that is, $v_{\ell,i}=v_j$ in \eqref{eq-prob-def}. Under this notation, define
    \begin{align*}
        p_s^E=\sum_{r=1}^H \mathds{1}\left\{s=j_r\right\}\quad\mbox{and}\quad
        q_s^E=\sum_{r=1}^H \mathds{1}\left\{\ell_r<s<j_r\right\} \,.
    \end{align*}
    Then, it follows directly from the combination of \cite[Equations (2.6), (2.7) and (3.3)]{VZ25} that
    \begin{align}\label{eq-edgeset-prob}
        \pa\left[E\subset E(G_n)\right]=\prod_{s=2}^n \frac{(m+\delta+p_s^E-1)_{p_s^E}((2s-3)m+(s-1)\delta+q_s^E-1)_{q_s^E}}{((2s-2)m+s\delta+p_s^E+q_s^E-1)_{p_s^E+q_s^E}} \,,
    \end{align}
    where $(x)_r=x(x-1)\ldots(x-r+1)$. Let $E_h=E\cup\{(\ell_h',i_h',j_h')\}$ for $h\in [H']$. Analogous to \eqref{eq-edgeset-prob}, if there does not exits $r\in[H]$ such that $(\ell_r,i_r)=(\ell_h',i_h')$ (note that $(\ell_r,i_r,j_r)\neq (\ell_h',i_h',j_h')$ since $E\cap E'=\emptyset$), then
    \begin{align}\label{eq-edgeset-prob-2}
        &\pa\left[E_h\subset E(G_n)\right]\nonumber\\
        =&\prod_{s=2}^n \frac{(m+\delta+p_s^{E_h}-1)_{p_s^{E_h}}((2s-3)m+(s-1)\delta+q_s^{E_h}-1)_{q_s^{E_h}}}{((2s-2)m+s\delta+p_s^{E_h}+q_s^{E_h}-1)_{p_s^{E_h}+q_s^{E_h}}} \,,
    \end{align}
    where
    \begin{align*}
        p_s^{E_h}=\sum_{r=1}^H \mathds{1}\left\{s=j_r\right\}+\mathds{1}\left\{s=j_h'\right\}\quad\mbox{and}\quad
        q_s^E=\sum_{r=1}^H \mathds{1}\left\{\ell_r<s<j_r\right\}+\mathds{1}\left\{\ell_h'<s<j_h'\right\} \,,
    \end{align*}
    otherwise $\pa\left[E_h\subset E(G_n)\right]=0$.
    %\SG{Please double check if this is correct. I can understand the meaning here, but there are two circumstances that may happen: (i) there exists $r$ such that $(\ell_r,i_r)=(\ell'_h,i'_h)$ but $j_r\neq j'_h$, this is of probability zero, as you said; (ii) there exists $r$ such that $(\ell_r,i_r)=(\ell'_h,i'_h)$ and $j_r = j'_h$, this implies the probability of $E_h$ and $E$ are equal (but not necessarily zero)? Anyway, this does not affect the proof of this lemma, but better rephrase, because actually we don't need to discuss the case ``there exists $r$ such that $(\ell_r,i_r)=(\ell'_h,i'_h)$".}\zh{The second case is excluded because $E\cap E'=\emptyset$.}
    Note that $j_h'\geq s$. Combining \eqref{eq-edgeset-prob} and \eqref{eq-edgeset-prob-2}, we conclude that
    \begin{align}
        &\pa\left[(\ell_h',i_h',j_h')\in E(G_n)\mid E\subset E(G_n)\right]\nonumber\\
        \leq\ &\frac{m+\delta+p_{j_h'}^{E_h}}{(2j_h'-2)m+j_h'\delta+p_{j_h'}^{E_h}+q_{j_h'}^{E_h}}\prod_{s=j_h'+1}^{\ell_h'-1} \frac{(2s-3)m+(s-1)\delta+q_s^{E_h}}{(2s-2)m+s\delta+p_s^{E_h}+q_s^{E_h}}\nonumber\\
        \leq\ &\frac{m+\delta+p_{j_h'}^{E_h}}{(2s-2)m+s\delta} \,.  \label{eq-one-edge-cond-prob}
    \end{align}
    Since 
    $$\sum_{h\in [H']}p_{j_h'}^{E_h}= \sum_{h\in [H']}\sum_{r\in[H]} \mathds{1}\left\{j_h'=j_r\right\}+\sum_{h\in [H']}\mathds{1}\left\{j_h'=j_h'\right\}\leq |E|+|E'|,$$ 
    the desired result follows directly from applying a union bound to \eqref{eq-one-edge-cond-prob}.
\end{proof}

% \begin{lemma}
%     Let $E=\left\{(\ell_h,i_h,j_h):h\in [H]\right\}$ be a set of edges in $\pa$ of size $H$, where $(\ell_h,i_h,j_h)$ means that the $i_h$ out-edge of vertex $\ell_h$ is connected to vertex $j_h$. Define
%     \begin{align*}
%         p_s^E=\sum_{r=1}^H \mathds{1}\left\{s=j_r\right\}\qquad\mbox{and}\qquad
%         q_s^E=\sum_{r=1}^H \mathds{1}\left\{\ell_r<s<j_r\right\}.
%     \end{align*}
%     Then,
%     \begin{align*}
%         \pa\left[E\right]=\prod_{s=2}^n \frac{(m+\delta+p_s^E-1)_{p_s^E}((2s-3)m+(s-1)\delta+q_s^E-1)_{q_s^E}}{((2s-2)m+s\delta+p_s^E+q_s^E-1)_{p_s^E+q_s^E}},
%     \end{align*}
%     where $(x)_r=x(x-1)\ldots(x-r+1)$.
% \end{lemma}
% \begin{proof}
%     It follows directly from the combination of \cite[equations (2.6), (2.7) and (3.3)]{VZ25}.
% \end{proof}

% \begin{corollary}
%     For $E=\left\{(\ell_h,i_h,j_h):h\in [H]\right\}$ and $E_0=\left\{(\ell_0,i_0,j_0)\right\}\not\subseteq E$,
%     \begin{align*}
%         \pa\left[E\cup E_0\mid E\right]=&\frac{m+\delta+p_{j_0}^E}{(2j_0-2)m+j_0\delta+p_{j_0}^E+q_{j_0}^E}\prod_{s=j_0+1}^{\ell_0-1} \frac{(2s-3)m+(s-1)\delta+q_s^E}{(2s-2)m+s\delta+p_s^E+q_s^E}\\
%         \leq &\frac{m+\delta+p_{j_0}^E}{(2j_0-2)m+j_0\delta+p_{j_0}^E+q_{j_0}^E}.
%     \end{align*}  

%     For $E_0\cap E= \emptyset$ such that any $(\ell,i,j)\in E_0$ satisfies $j\ge s$, it holds
%     \[
%     \PA_n^{(m,\delta)}[E_0\cap E(G_n)\neq \emptyset\mid E\subset E(G_n)]\le \frac{|E_0|(m+\delta)+|E|}{(2s-2)m+s\delta}\,.
%     \]
% \end{corollary}

Throughout the rest of the paper, we denote 
\begin{equation}{\label{eq-def-mathtt-L}}
    \mathtt{L}_n=(\log n)^{{2}/{3}} \,.
\end{equation}
The main goal of this section is to prove the following lemma, which provides a lower bound on the size of $O(\mathtt{L}_n)$-neighborhoods in ${G}_n$. The bound is far from tight but is sufficient for our use.

\begin{lemma}\label{lem-size-of-neighborhood}
    There exists $r=r(m,\delta)$ such that as $n\to\infty$, 
    \begin{equation}\label{eq-size-neighborhood-lower-bound-G}
    \PA_{n}^{(m,\delta)}\Big[ \big| \operatorname{N}_{r\mathtt{L}_n}(v) \big|\ge (\log n)^4\,,\forall v\in \llb 1,n\rrb\Big]=1-o(1)\,.
    \end{equation}
\end{lemma}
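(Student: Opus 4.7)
The plan is to prove the uniform bound by backward exploration: for every $v \in V_n$, construct a branching ``back-tree'' from $v$ of depth $O(\mathtt{L}_n)$ using the $m$ ascendant edges at each vertex, and show it contains $\ge (\log n)^4$ distinct vertices of $G_n$. Note that the target here is rather generous --- exponential growth of the local weak limit at rate $\nu>1$ suggests $O(\log\log n)$ depth already suffices --- so a crude but robust argument should be enough.

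Concretely, I would set $T_0(v)=\{v\}$ and recursively define $T_{k+1}(v)$ by adjoining the $m$ ascendant vertices (the $v_{t,j}$ from Definition~\ref{def-preferential-attachment-model}) of each vertex in $T_k(v)$. By construction $T_k(v)\subseteq \operatorname{N}_k(v)$, and as a multiset $T_k(v)$ has $1+m+\cdots+m^k$ slots. The problem reduces to showing that these slots collapse onto few distinct vertices only with very small probability, uniformly in $v$.

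To control this, I would split on the label of $v$. For \emph{old} vertices $v_i$ with $i\le n/(\log n)^C$ (for $C=C(m,\delta)$ large), standard moment bounds on the preferential attachment degree sequence give $D_{v_i}(n)\ge (\log n)^4$ with probability $1-O(n^{-2})$, so $|\operatorname{N}_1(v_i)|$ alone already suffices with $r=1$. For \emph{young} vertices $v_i$ with $i>n/(\log n)^C$, I would use the back-tree to either (i) collect $(\log n)^4$ distinct ancestors outright, or (ii) reach some old vertex along a branch, reducing to case~(i). Lemma~\ref{lem-edgeset-cond-prob} is the workhorse for bounding the probability of various ``bad'' back-tree configurations (collisions among ancestor paths, or ancestors that concentrate on a small set of old vertices) by summing over the edge sets required to realize them.

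The main obstacle is making the union bound over all $n$ starting vertices work. A single ancestor walk has only Hoeffding-style concentration on $\log(\text{label})$, giving per-vertex failure probability roughly $\exp(-\Omega(\mathtt{L}_n))=\exp(-\Omega((\log n)^{2/3}))$, which is \emph{not} summable against a factor of $n$. To overcome this, I would exploit the $m^{\Omega(\mathtt{L}_n)}$-fold branching of the back-tree: with many near-independent ancestor walks available, the minimum label decays much more rapidly, and Lemma~\ref{lem-edgeset-cond-prob} can be summed over collision/concentration patterns with the required $o(1/n)$ precision. Producing clean enough collision bounds to actually realize this branching advantage --- in particular, ruling out the scenario where many ancestor slots accidentally funnel back into the same few intermediate vertices --- is the technical heart of the argument.
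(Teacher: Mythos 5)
Your overall plan --- explore the back-tree $\operatorname{N}^\downarrow_r(v)$ for $r = O(\mathtt{L}_n)$, exploit the $m$-fold branching, and control collisions with Lemma~\ref{lem-edgeset-cond-prob} --- is the same backbone as the paper's argument, but there are two genuine gaps.

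First, the ``old vertex'' split does not work as stated. You claim that for $i \le n/(\log n)^C$ ``standard moment bounds'' give $D_{v_i}(n) \ge (\log n)^4$ with probability $1 - O(n^{-2})$. Positive-moment bounds only control the \emph{upper} tail; the relevant quantity is a \emph{lower}-tail bound, and for the preferential attachment model the normalized degree $D_{v_i}(n)/(n/i)^{\chi}$ (with $\chi = m/(2m+\delta)$) converges to a positive random variable whose lower tail vanishes only polynomially, $\mathbb{P}[\cdot < \varepsilon] \asymp \varepsilon^{m+\delta}$. Hence $\mathbb{P}[D_{v_i}(n) < (\log n)^4] \asymp ((\log n)^4 (i/n)^{\chi})^{m+\delta}$, and summing this over $i \le n/(\log n)^C$ gives $\Theta(n/\mathrm{poly}(\log n)) \to \infty$, so the union bound over old vertices fails no matter how large you take $C$. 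The paper avoids this issue entirely by using a \emph{structural} anchor rather than a per-vertex degree bound: it invokes a known $O(\log n)$ bound on the diameter of the truncated graph $G_{\lfloor e^{10\sqrt{\log n}}\rfloor}$, which guarantees that any vertex whose back-tree reaches $\llb 1, e^{10\sqrt{\log n}}\rrb$ immediately sees $\gg (\log n)^4$ vertices within $O(\mathtt{L}_n)$ additional steps.

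Second, you explicitly flag ``ruling out the scenario where many ancestor slots accidentally funnel back into the same few intermediate vertices'' as the technical heart, and you leave it open. This is precisely what the paper supplies. Conditional on the back-BFS history and on all labels so far staying $\ge e^{10\sqrt{\log n}}$, Lemma~\ref{lem-edgeset-cond-prob} gives a per-collision probability $\le e^{-3\sqrt{\log n}}$. If the back-tree up to depth $2\mathtt{L}_n$ has total size $\le (\log n)^4$ without ever reaching a label below $e^{10\sqrt{\log n}}$, a pigeonhole argument forces at least $\Omega(\mathtt{L}_n/\log\log n)$ BFS levels with at least one collision each, giving total probability $\exp(-\Omega(\mathtt{L}_n\sqrt{\log n}/\log\log n)) = n^{-\omega(1)}$ after summing over the collision patterns (the enumeration factor $(\log n)^{O(\mathtt{L}_n)}$ is absorbed). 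Note that the $\exp(-\sqrt{\log n})$ per-collision rate is exactly what turns the $\exp(-\Omega(\mathtt{L}_n))$ bound you correctly flag as insufficient into $\exp(-\Omega((\log n)^{7/6}/\log\log n)) = o(1/n)$, and it is available only because the $\mathcal E_0$-based anchor lets one assume the entire back-tree lives in $\llb e^{10\sqrt{\log n}}, n\rrb$. Without that anchor your scheme is missing the ingredient that makes the collision cost large.
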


\begin{proof}
    First, using \cite[Theorem~8.33]{vdHof24} we have $\PA[\mathcal E_0]=1-o(1)$ that for some constant $C=C(m,\delta)>0$, where  
    \begin{equation}{\label{eq-def-mathcal-E-0}}
        \mathcal E_0:=\Big\{ \operatorname{diam}(G_{\lfloor e^{10\sqrt{\log n}}\rfloor}) \leq C\sqrt{\log n} \Big\} \,.
    \end{equation}
    Assume $\mathcal E_0$ holds, then it follows that for any $v\in \llb 1,e^{10\sqrt{\log n}}\rrb$, $|\operatorname{N}_{C\mathtt{L}_n}(v)|\ge e^{10\sqrt{\log n}}-1$ as the neighborhood contains all vertices in $\llb 1,e^{10\sqrt{\log n}}\rrb$. 
    %\zh{Is that because all vertices in $G_{e^{10\sqrt{\log n}}}$ are in $\operatorname{N}_{C\mathtt{L}_n}(v)$?} \HD{Yes.}
    Thus, under the event $\mathcal E_0$, if a vertex has distance no more than $R=2\mathtt{L}_n$ to the set $\llb 1,e^{10\sqrt{\log n}}\rrb$, then its $(C+2)\mathtt{L}_n$-neighborhood has size at least $e^{10\sqrt{\log n}}-1\gg (\log n)^4$. In what follows we prove that for any $u \in \llb 1,n\rrb$, we have %\SG{It should be a comma rather than ; ? ; usually means conditional probability? }
    \begin{equation}{\label{eq-goal-neighborhood-size}}
        \PA\Big[ \operatorname{N}_r^{\downarrow}(v) \cap \llb 1,e^{10\sqrt{\log n}}\rrb =\emptyset; |\operatorname{N}_r^{\downarrow}(v)| \leq (\log n)^4 \Big] \leq\tfrac{1}{n^3} \,.
    \end{equation}
    Provided that \eqref{eq-goal-neighborhood-size} is correct, we may consider the event
    \begin{equation}{\label{eq-def-mathcal-G-0}}
        \mathcal G_0 = \mathcal E_0 \bigcap \Big( \cap_{1 \leq u \leq n} \big\{ \operatorname{N}_{2\mathtt{L}_n}^{\downarrow}(u) \cap \llb 1,e^{10\sqrt{\log n}}\rrb =\emptyset; |\operatorname{N}_{2\mathtt{L}_n}^{\downarrow}(u)| \leq (\log n)^4 \big\}^{c} \Big) \,.
    \end{equation}
    We have that $\mathcal G_0$ implies that $|\operatorname{N}_{(C+2)\mathtt{L}_n}(u)|\geq (\log n)^4$ for all $u \in \llb 1,n \rrb$ and $\PA[\mathcal G_0]=1-o(1)$ from a union bound.
    
    To prove \eqref{eq-goal-neighborhood-size}, we consider the breath-first-search (BFS) process starting at $v$, and let $\operatorname{S}_r= \operatorname{N}_{r}^\downarrow(v)\setminus N_{r-1}^{\downarrow}(v)$ for $1\le r\le 2\mathtt{L}_n$ (we use the convention that $\operatorname{N}_0^{\downarrow}(v)=\{v\}$). Under our assumption, we have 
    \begin{equation}{\label{eq-assumption-operator-S}}
        \operatorname{S}_1\cup\cdots\cup\operatorname{S}_{2\mathtt{L}_n}\subset \llb e^{10\sqrt{\log n}},n\rrb \mbox{ and } \sum_{r\le 2\mathtt{L}_n}|\operatorname{S}_r|\le (\log n)^2\,.
    \end{equation} 
    We claim that for any $r$, given any choices of $k_r=|\operatorname{S}_r|,0\le r\le 2\mathtt{L}_n$ (with $k_0=1$), it holds that
    \begin{align}
        &\pa\Big[|\operatorname{S}_r|=k_r,\operatorname{S}_r \subset \llb e^{10\sqrt{\log n}},n\rrb \mbox{ for all } 1\le r\le 2\mathtt{L}_n \Big] \nonumber \\
        \le\ & \prod_{r=1}^{2\mathtt{L}_n} (mk_{r-1})^{mk_{r-1}-k_{r}}e^{-3\sqrt{\log n}(mk_{r-1}-k_{r})}\,. \label{eq-prob-upper-bound}
    \end{align}
    The claim follows upon showing the following conditional probability estimate: for any $1\le r\le 2\mathtt{L}_n$ and any legitimate realizations of $\operatorname{S}_0,\dots,\operatorname{S}_{r-1}$ such that $|\operatorname{S}_i|=k_i$ and $\operatorname{S}_i \subset \llb e^{10\sqrt{\log n}},n\rrb$ we have 
    \begin{equation}\label{eq-conditional-probability-upper-bound}
        \pa\Big[ |S_r|=k_r\mid \operatorname{S}_0,\dots,\operatorname{S}_{r-1} \Big]\le (mk_{r-1})^{mk_{r-1}-k_{r}}e^{-3\sqrt{\log n}(mk_{r-1}-k_{r})}\,.
    \end{equation}
    Clearly $k_{r}\le mk_{r-1}$ and if equality holds there is nothing to prove. Otherwise, we have $mk_{r-1}-k_{r}$ edges among the $mk_{r-1}$ edges attached from vertices in $\operatorname{S}_{r-1}$ attaches to some other vertices in $\operatorname{S}_1 \cup \ldots \cup \operatorname{S}_{r-1}$. The choices of such $mk_{r-1}-k_{r}$ edges are at most $(mk_{r-1})^{mk_{r-1}-k_{r}}$. For each fixed choice of these edges, we label them as $e_1,\dots,e_{mk_{r}-k_{r-1}}$ according to their labels with increasing order. In addition, we denote $E_i$ to be the set of edges attached from vertices in $\operatorname{S}_0 \cup \ldots \cup \operatorname{S}_{r-1}$ that occurs prior to $e_i$ in the BFS process. Then we have $|E_i| \leq m|\operatorname{S}_0 \cup \ldots \cup \operatorname{S}_{r-1}|\leq m(\log n)^4$.
    Using Lemma~\ref{lem-edgeset-cond-prob}, conditioned on $E_{i}$ we have that the probability that $e_k$ attaches to an existing vertex in $\operatorname{S}_0 \cup \ldots \cup \operatorname{S}_{r-1}$ is at most (recall that we choose $\mathtt{L}_n=(\log n)^{2/3}$)
    \[
        \frac{|E_i|+(m+\delta)|\operatorname{S}_0 \cup \ldots \cup \operatorname{S}_{r-1}|}{ 2(e^{10\sqrt{\log n}}-2)m + e^{10\sqrt{\log n}}\delta } \leq e^{-3\sqrt{\log n}} \,.
    \]
    Therefore, we obtain the probability upper bound $e^{-3\sqrt{\log n}(mk_{r-1}-k_{r})}$ and thus \eqref{eq-conditional-probability-upper-bound} follows by taking the union bound. This proves \eqref{eq-prob-upper-bound}. 

    Note that the right hand side of \eqref{eq-prob-upper-bound} can be further relaxed to
    \begin{align*}
        \prod_{r=1}^{2\mathtt{L}_n}\big(m(\log n)^2e^{-3\sqrt{\log n}}\big)^{mk_{r-1}-k_r}\le&\ \exp\Big(-2\sqrt{\log n}\sum_{r=1}^{2\mathtt{L}_n}(mk_{r-1}-k_r)\Big)\\
        \le&\ \exp(-2\mathtt{L}_n\sqrt{\log n}/2\log\log n)\,,
    \end{align*}
    where the last inequality is due to the fact that $k_r\ge 1$ (as the vertex in $\operatorname{S}_{r-1}$ with the minimal label must attach to a vertex not in $\operatorname{S}_0\cup\cdots\operatorname{S}_{r-1}$) and $k_r \leq (\log n)^2$ for all $r \leq R$ implies that there does not exist any $1 \leq r \leq R$ such that $k_{r+i}=mk_{r+i-1}$ for all $1 \leq i \leq 2\log\log n$ (also recall that $mk_{r-1}-k_r \geq 0$). Using this bound and by further taking the union bound over the choices of $k_r=|\operatorname{S}_r|\le (\log n)^2$, we see the probability we concern is upper bounded by
    \[
        (\log n)^{2R}\exp(-\mathtt{L}_n\sqrt{\log n}/\log\log n)\le \exp(-\mathtt{L}_n\sqrt{\log n}/2\log\log n)\le n^{-3}\,,
    \]
    %\zh{Should it be $n^{-3}$ because of \eqref{eq-goal-neighborhood-size}?}\HD{Yes}
    as desired. This concludes the proof.
\end{proof}

\section{Proof of Theorem~\ref{thm-main}}

Let $\{M_n\}$ be an increasing sequence that a.a.s. upper bounds the medium distance of $G_n$. By Proposition~\ref{prop-typical-distance} we can pick $\{M_n\}$ such that $M_n=(1+o(1))\log_\nu n$.   
\begin{comment}
Besides Lemma~\ref{lem-size-of-neighborhood}, our main technical input is the following result on the distance between typical vertices in \cite{VZ25}.
\begin{proposition}[Theorem 1.1, \cite{VZ25}]
    There exists a sequence $\varepsilon_n\downarrow 0$ such that as $n\to\infty$,
    \[
    \PA_{n}^{(m,\delta)}\Big[\mathbb{P}_{(u,v)\sim \operatorname{Uni}(\llb 1,n\rrb)^{\otimes 2}}[\operatorname{dist}_{G_n}(u,v)\le M_n]\Big]\ge 1-\varepsilon_n\,.
    \]
\end{proposition}
\end{comment}
Our goal is to show that the diameter of $G_n$ has an a.a.s. upper bound $M_n+O(\mathtt{L}_n)$ (recall \eqref{eq-def-mathtt-L}). We further denote $\mathtt{K}_n =\tfrac{n}{\log n}$.

\begin{definition}\label{def-typical-vertex}
    For any vertex $u$ in $\llb 1,n-2\mathtt{K}_n\rrb$, denote 
    \begin{equation}{\label{eq-def-mathcal-A(u)}}
        \mathcal A(u):= \big\{ v \in \llb 1,n-2\mathtt{K}_n\rrb: \operatorname{dist}_{G_{n-2\mathtt{K}_n}}(u,v) \leq M_n\big\} \,.
    \end{equation}
    In addition, a vertex $u$ in $\llb 1,n-2\mathtt{K}_n\rrb$ is called typical, if $|\mathcal A(u)|\geq \lfloor n/10\rfloor$. 
\end{definition}

\begin{lemma}{\label{lem-mathcal-G-is-typical}}
    Define $\mathcal G_1$ as the event that there are at least $\lfloor n/10\rfloor$ typical vertices. We have $\PA_{n}^{(m,\delta)}[\mathcal G_1]=1-o(1)$ as $n\to\infty$.
\end{lemma}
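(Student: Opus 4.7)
The plan is to reduce the claim directly to the typical distance estimate of Proposition~\ref{prop-typical-distance}, applied not to $G_n$ itself but to the intermediate graph $G_N$ with $N = n - 2\mathtt{K}_n$, which by the definition of the preferential attachment process has distribution $\PA_N^{(m,\delta)}$. Since $N = (1-o(1))n$, we have $\log_\nu N = (1+o(1))\log_\nu n$, so the sequence $M_n$ (satisfying $M_n = (1+o(1))\log_\nu n$) also dominates $(1+o(1))\log_\nu N$. Extracting a rate from the convergence in probability in Proposition~\ref{prop-typical-distance}, applied with parameter $N$ in place of $n$, yields a vanishing sequence $\varepsilon_n \downarrow 0$ such that
\[
\PA_N^{(m,\delta)} \otimes \operatorname{Uni}(\llb 1,N\rrb)^{\otimes 2}\!\left[\operatorname{dist}_{G_N}(u,v) > M_n\right] \leq \varepsilon_n \,.
\]

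Given this, I would run Markov's inequality twice. A first Markov bound over $G_N$ shows that with $\PA_N^{(m,\delta)}$-probability at least $1 - \sqrt{\varepsilon_n}$, the fraction of pairs $(u,v) \in \llb 1,N\rrb^2$ with $\operatorname{dist}_{G_N}(u,v) > M_n$ is at most $\sqrt{\varepsilon_n}$. On this good event, a second Markov bound over the choice of $u$ shows that all but at most a $2\sqrt{\varepsilon_n}$ fraction of $u$'s have the property that at least half of $v \in \llb 1, N \rrb$ satisfy $\operatorname{dist}_{G_N}(u,v) \leq M_n$, i.e., $|\mathcal{A}(u)| \geq N/2 \geq n/10$ for $n$ large. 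This furnishes at least $(1 - 2\sqrt{\varepsilon_n})N \geq n/2$ typical vertices, comfortably exceeding $\lfloor n/10 \rfloor$, and $\mathcal{G}_1$ fails only on the sub-$\sqrt{\varepsilon_n}$-probability bad event from the first Markov step.

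There is no real obstacle here: the entire argument is a soft averaging consequence of Proposition~\ref{prop-typical-distance}, with no need to invoke Lemma~\ref{lem-edgeset-cond-prob} or Lemma~\ref{lem-size-of-neighborhood}. The only minor point of care is the translation from the convergence-in-probability statement of Proposition~\ref{prop-typical-distance} to the one-sided bound with an explicit rate $\varepsilon_n$, which is standard. The constants $1/10$ and $1/2$ are loose and chosen only for convenience; the argument actually produces $(1-o(1))n$ typical vertices.
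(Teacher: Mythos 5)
Your argument is correct and is essentially the same as the paper's: both exploit that $G_{n-2\mathtt{K}_n}\sim\PA_{n-2\mathtt{K}_n}^{(m,\delta)}$, apply Proposition~\ref{prop-typical-distance} at size $N=n-2\mathtt{K}_n$, and then convert the typical-distance bound into a count of typical vertices by a purely deterministic averaging step. The paper phrases the averaging as a contrapositive (if fewer than $\lfloor n/10\rfloor$ vertices are typical, the median distance in $G_{n-2\mathtt{K}_n}$ would exceed $M_n$), whereas you run Markov's inequality twice — once over $G_N$ to get a whp bound on the fraction of far pairs and once over $u$ — but this is the same elementary counting argument.
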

\begin{proof}
    %\zh{I made some changes here (turn $\llb 1,n\rrb$ into $\llb1,n-2\mathtt{K}_n\rrb$).}
    Consider the event
    \begin{align*}
        \widetilde{\mathcal G}_1:= \Big\{ \mathbb{P}_{(u,v)\sim \operatorname{Uni}(\llb 1,n-2\mathtt{K}_n\rrb)^{\otimes 2}}\big[ \operatorname{dist}_{G_{n-2\mathtt{K}_n}}(u,v)\le M_n \mid G_n \big] \geq 1/2\Big\}\,,
    \end{align*}
    where $\mathbb{P}_{(u,v)\sim \operatorname{Uni}(\llb 1,n-2\mathtt{K}_n\rrb)^{\otimes 2}}$ is taken over $u,v$ chosen from $\llb 1,n-2\mathtt K_n\rrb$ uniformly and independently at random. 
    Then, by our choice of $M_n$ (which is increasing in $n$), $\PA_n^{(m,\delta)}[\widetilde{\mathcal G}_1]=1-o(1)$. 
    
    On the other hand, we claim that $\mathcal G_1^c\subset \widetilde{\mathcal G}_1^c$. This is because assuming $G_n\in \mathcal G_1^c$, by the union bound we have
    \begin{align*}
    &\ \mathbb{P}_{(u,v)\sim \operatorname{Uni}(\llb 1,n-2\mathtt{K}_n\rrb)^{\otimes 2}}\big[ \operatorname{dist}_{G_n}(u,v)\le M_n \mid G_n \big]\\
    \le\ & \mathbb{P}_{u\sim \operatorname{Uni}(\llb 1,n-2\mathtt{K}_n\rrb)}\big[u\text{ is typical} \mid G_n \big]+ \\
    &\mathbb{P}_{(u,v)\sim \operatorname{Uni}(\llb 1,n-2\mathtt{K}_n\rrb)^{\otimes 2}}\big[ u\text{ is not typical},\operatorname{dist}_{G_{n-2\mathtt K_n}}(u,v)\le M_n \mid G_n \big] \\
    \le&\ 0.1+0.1<1/2\,,
    \end{align*}
    %\zh{Maybe we should mention that we condition on the graph $G_n$ (or something else) here?}\HD{Does it look good now?}\zh{Shall we write $\mathbb{P}[u\text{ is typical}\mid G_n]$ instead of $\mathbb{P}[u\text{ is typical}]$? Since if we just write $\mathbb P$, it seems that we did not condition on the graph. And there might be some ambiguity, as we defined at the beginning of the proof that $\mathbb{P}$ is taken over $u,v$ chosen from $\llb 1,n-2\mathtt K_n\rrb$ uniformly and independently at random.}
    and thus $G_n\in \widetilde{\mathcal G}_1^c$. Therefore, we have $\PA_{n}^{(m,\delta)}[\mathcal G_1]\ge \PA_{n}^{(m,\delta)}[\widetilde{\mathcal G}_1]=1-o(1)$, as desired.
\end{proof}

\begin{lemma}{\label{lem-dist-small-vert}}
    Recall the definition of $\mathcal G_0$ in \eqref{eq-def-mathcal-G-0}. Also define 
    \begin{equation}{\label{eq-def-mathcal-G-2}}
        \mathcal G_2:=\cap_{1 \leq u,v \leq n-2\mathtt{K}_n}\Big\{ \operatorname{dist}_{G_n}(u,v) \leq M_n+2\mathtt{L}_n+4 \Big\} \,.
    \end{equation}
    We have $\PA_n^{(m,\delta)}[\mathcal G_2^{c};\mathcal G_1;\mathcal G_0]=o(1)$.
\end{lemma}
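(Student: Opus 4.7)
The plan is a sprinkling argument driven by the last $2\mathtt{K}_n = 2n/\log n$ vertices of $G_n$, whose attachments supply conditionally independent randomness on top of $G_{n-2\mathtt{K}_n}$. These sprinkling edges will be shown to create short bridges from every early vertex $u \in \llb 1, n-2\mathtt{K}_n\rrb$ to any prescribed large subset of early vertices determined by $G_{n-2\mathtt{K}_n}$.

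Setup: by applying Lemma~\ref{lem-size-of-neighborhood} at scale $n - 2\mathtt{K}_n$, with probability $1-o(1)$ every $u \in \llb 1, n-2\mathtt{K}_n\rrb$ satisfies $|B(u)| \ge (\log n)^4/2$, where $B(u) := \operatorname{N}_{r\mathtt{L}_n}^{G_{n-2\mathtt{K}_n}}(u)$ denotes the $G_{n-2\mathtt{K}_n}$-neighborhood; this $G_{n-2\mathtt{K}_n}$-measurable event can be added to the conditioning at cost $o(1)$. Under $\mathcal{G}_1$ we additionally have $|T| \ge \lfloor n/10 \rfloor$ and $|\mathcal{A}(u^*)| \ge \lfloor n/10 \rfloor$ for each typical $u^* \in T$, with both quantities $G_{n-2\mathtt{K}_n}$-measurable.

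The core estimate, a \emph{sprinkling-bridge} bound, is the following: for any $u \in \llb 1, n-2\mathtt{K}_n\rrb$ and any $G_{n-2\mathtt{K}_n}$-measurable $S \subset \llb 1, n-2\mathtt{K}_n\rrb$ with $|S| \ge \lfloor n/10 \rfloor$,
\begin{equation*}
\PA_n^{(m,\delta)}\bigl[\nexists\, t \in (n-2\mathtt{K}_n, n]\colon v_t \text{ has two edges landing in } B(u) \text{ and } S \text{ respectively} \,\bigm|\, G_{n-2\mathtt{K}_n}\bigr] \le e^{-\Omega((\log n)^3)}.
\end{equation*}
This follows from \eqref{eq-prob-def}: for each sprinkling vertex $v_t$, uniformly over its preceding history, the probability that its first edge attaches in $B(u)$ is at least $\delta|B(u)|/((2m+\delta)n) \gtrsim (\log n)^4/n$, and given this its second edge attaches in $S$ with conditional probability bounded below by a positive constant (as $|S| \ge n/10$). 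Hence each sprinkling vertex succeeds as a bridge with probability $\gtrsim (\log n)^4/n$ regardless of history, and iterating this uniform lower bound across the $2\mathtt{K}_n \sim n/\log n$ sprinkling vertices via the standard conditional-product calculation yields the stated subexponential bound.

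Conclusion and main obstacle: I would apply the bridge estimate to two families and union-bound, the total failure being $O(n^2) \cdot e^{-\Omega((\log n)^3)} = o(1)$: (i) for every $u \in \llb 1, n-2\mathtt{K}_n\rrb$ with $S = T$ (at most $n$ events), yielding a typical $u^* \in T$ with $\operatorname{dist}_{G_n}(u, u^*) \le r\mathtt{L}_n + 2$ via the path $u \leadsto w \in B(u) \to v_t \to u^*$; and (ii) for every pair $(v, u^*) \in \llb 1, n-2\mathtt{K}_n\rrb \times T$ with $S = \mathcal{A}(u^*)$ (at most $n^2$ events), yielding $v^* \in \mathcal{A}(u^*)$ with $\operatorname{dist}_{G_n}(v, v^*) \le r\mathtt{L}_n + 2$. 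On success, for any $(u, v)$ picking such $u^*$ close to $u$ and $v^* \in \mathcal{A}(u^*)$ close to $v$ gives $\operatorname{dist}_{G_n}(u^*, v^*) \le \operatorname{dist}_{G_{n-2\mathtt{K}_n}}(u^*, v^*) \le M_n$ by definition of $\mathcal{A}$, so the triangle inequality yields $\operatorname{dist}_{G_n}(u, v) \le M_n + 2(r\mathtt{L}_n + 2) = M_n + O(\mathtt{L}_n)$, matching $\mathcal{G}_2$'s bound after absorbing the constant $r = r(m, \delta)$. The main technical hurdle is the bridge estimate itself: preferential-attachment correlations are delicate, but the uniform-in-history lower bound from \eqref{eq-prob-def} decouples the iteration across sprinkling vertices and keeps the conditional-product argument clean.
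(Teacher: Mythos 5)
Your proposal is correct and takes essentially the same approach as the paper: a sprinkling argument over the last $2\mathtt{K}_n$ vertices, each serving as a potential ``bridge'' by having one attachment land in a polylogarithmically large neighborhood of $u$ (or $v$) and another land in a linear-size target set ($\mathcal{T}$ or $\mathcal{A}(u^*)$), followed by a union bound over $O(n^2)$ events. The only differences are minor bookkeeping: the paper splits the sprinkling window into $(n-2\mathtt{K}_n, n-\mathtt{K}_n]$ and $(n-\mathtt{K}_n, n]$ for a sequential two-stage conditioning (first find $w \in \mathcal{T}$ near $u$ given $G_{n-2\mathtt{K}_n}$, then find a vertex of $\mathcal{A}(w)$ near $v$ given $G_{n-\mathtt{K}_n}$), whereas you use the full window with a single-stage union bound over pairs $(v, u^*)$; and your final constant is $2r\mathtt{L}_n+4$ rather than the paper's stated $2\mathtt{L}_n+4$ (the paper is actually slightly loose here, since the neighborhood radius needed from $\mathcal{G}_0$ is $(C+2)\mathtt{L}_n$, not $\mathtt{L}_n$ --- your version tracks the constant correctly, and the discrepancy is immaterial since the theorem only needs $O(\mathtt{L}_n)$).
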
 
\begin{proof}
    Our proof will follow a two-step argument. Denote $\mathcal T\subset \llb 1,n-2\mathtt{K}_n\rrb$ to be the set of typical vertices. Also fix $u,v \in \llb 1,n-2\mathtt{K}_n\rrb$. We first show that for all realizations $G_{n-2\mathtt{K}_n}$ that is compatible with $\mathcal G_0 \cap \mathcal G_1$ we have
    \begin{equation}{\label{eq-connect-to-typical-vert}}
        \PA_{n}^{(m,\delta)}\big[ \operatorname{dist}_{G_{n-\mathtt{K}_n}}(u,\mathcal T) \leq \mathtt{L}_n+2 \mid G_{n-2\mathtt{K}_n} \big] \geq 1-\tfrac{1}{n^3} \,.
    \end{equation}
    To this end, denote $\widehat{\operatorname{N}}_{\mathtt{L}_n}(u)$ to be the $\mathtt{L}_n$-neighborhood of $u$ in $G_{n-2\mathtt{K}_n}$ (note that $\widehat{\operatorname{N}}_{\mathtt{L}_n}(u)$ and $\mathcal T$ are measurable with $G_{n-2\mathtt{K}_n}$). Under $\mathcal G_0$ we get that $|\widehat{\operatorname{N}}_{\mathtt{L}_n}(u)| \geq (\log n)^3$. If $\widehat{\operatorname{N}}_{\mathtt{L}_n}(u) \cap \mathcal T\neq \emptyset$ then we have $\operatorname{dist}_{G_{n-2\mathtt{K}_n}}(u,\mathcal T)\leq \mathtt{L}_n$. Otherwise, we have
    \begin{align*}
        &\PA_n^{(m,\delta)}\Big[ \operatorname{dist}_{G_{n-\mathtt{K}_n}}(u,\mathcal T) \geq \mathtt{L}_n+2 \mid G_{n-2\mathtt{K}_n} \Big] \\
        \leq\ &\PA_n^{(m,\delta)}\Big[ \cap_{n-2\mathtt{K}_n+1\leq w \leq n-\mathtt{K}_n} \big( \{ \operatorname{N}_1^\downarrow(w) \cap \mathcal T=\emptyset \} \cup \{ \operatorname{N}_1^\downarrow(w) \cap \widehat{\operatorname{N}}_{\mathtt{L}_n}(u) = \emptyset \} \big) \mid G_{n-2\mathtt{K}_n} \Big] \,.
    \end{align*}
    %\zh{Sorry, is $\operatorname{N}^\downarrow(w)=\operatorname{N}^\downarrow_1(w)$? I did not find its definition.}
    Note that for any $n-2\mathtt{K}_n+1\leq w \leq n-\mathtt{K}_n$, given any realizations $G_{w-1}$ that is compatible with $\mathcal G_0 \cap \mathcal G_1$ we have
    \begin{align*}
        &\PA_n^{(m,\delta)}\big[ \big( \{ \operatorname{N}_1^\downarrow(w) \cap \mathcal T=\emptyset \} \cup \{ \operatorname{N}_1^\downarrow(w) \cap \widehat{\operatorname{N}}_{\mathtt{L}_n}(u) = \emptyset \} \big) \mid G_{w-1} \big] \\
        \leq\ & 1- \Omega(1)\cdot \tfrac{|\widehat{\operatorname{N}}_{\mathtt{L}_n}(u)|}{n} \leq 1-\Omega(1) \cdot \tfrac{(\log n)^3}{n} \,.
    \end{align*}
    %\zh{Why we have a $(1-\Omega(1))$ factor in the first inequality?} \red{()}
    Thus, we have
    \begin{align*}
        &\PA_n^{(m,\delta)}\big[ \operatorname{dist}_{G_{n-\mathtt{K}_n}}(u,\mathcal T) \geq \mathtt{L}_n+2 \mid G_{n-2\mathtt{K}_n} \big] \leq \big(1-\Omega(1)\cdot\tfrac{(\log n)^3}{n}\big)^{\mathtt{K}_n} \leq \tfrac{1}{n^3} \,,
    \end{align*}
    which verifies \eqref{eq-connect-to-typical-vert}. Now for any realization $G_{n-\mathtt{K}_n}$ compatible with $\mathcal G_0,\mathcal G_1$ and that there exists $w \in \mathcal T$ with $\operatorname{dist}_{G_{n-\mathtt{K}_n}}(u,w)\leq \mathtt{L}_n+2$. Since $w \in \mathcal T$ we have $|\mathcal A(w)|\geq \lfloor n/10 \rfloor$. Similarly as \eqref{eq-connect-to-typical-vert}, we can show that 
    \begin{equation}{\label{eq-connect-to-typical-neighborhood}}
        \PA_n^{(m,\delta)}\big[ \operatorname{dist}_{G_n}(v,\mathcal A(w)) \leq \mathtt{L}_n+2 \mid G_{n-\mathtt{K}_n} \big] \geq 1-\tfrac{1}{n^3} \,. 
    \end{equation}
    Combined with \eqref{eq-connect-to-typical-vert}, it holds that $\PA_n^{(m,\delta)}[\operatorname{dist}_{G_n}(u,v) \leq M_n+2\mathtt{L}_n+4] \geq 1-\tfrac{2}{n^3}$. The desired result then follows from a simple union bound.
\end{proof}

\begin{lemma}\label{lem-many-large-label-in-neighborhood}
    Define  
    \begin{equation}{\label{eq-def-mathcal-G-3}}
        \mathcal G_3 := \cap_{n-2\mathtt{K}_n+1 \leq u \leq n} \Big\{ \operatorname{dist}_{G_n}(u,\llb 1,n-2\mathtt{K}_n \rrb) \leq \mathtt{L}_n+1 \Big\} \,,
    \end{equation}
    we have $\PA_n^{(m,\delta)}[\mathcal G_3^c;\mathcal G_0]=o(1)$.
\end{lemma}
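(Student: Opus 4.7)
The plan is to show that for every late vertex $u$ with label $t \in (n-2\mathtt{K}_n, n]$, the probability that $\operatorname{dist}_{G_n}(u, \llb 1, n-2\mathtt{K}_n\rrb) > \mathtt{L}_n + 1$ is at most $n^{-2}$, so that the desired bound follows from a union bound over the $\le 2\mathtt{K}_n \le n$ late vertices.

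The key probabilistic input is a \emph{low late-weight} estimate. Every edge that touches a late vertex is created at a late step, and there are at most $2\mathtt{K}_n$ such steps each producing $m$ edges. Hence, at any time $t > n-2\mathtt{K}_n$, the sum of $D_v(t)+\delta$ over late $v$ is at most $(4m+2\delta)\mathtt{K}_n = O(n/\log n)$, while the total preferential-attachment weight is $\Theta(n)$. Thus, conditional on $G_{t,i-1}$, the probability that $v_{t,i}$ is a late vertex is at most $c/\log n$ for a constant $c=c(m,\delta)$. Revealing edges in temporal order and applying the tower property, any specified collection of $N$ edges are simultaneously all attached to late vertices with probability at most $(c/\log n)^{N}$.

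Fix a late $u$. On the bad event $\{\operatorname{dist}_{G_n}(u,\llb 1,n-2\mathtt{K}_n\rrb)>\mathtt{L}_n+1\}$, the ascendant neighborhood $\operatorname{N}_{\mathtt{L}_n+1}^{\downarrow}(u)$ lies entirely inside the late set; equivalently, every one of the $m\,|\operatorname{N}_{\mathtt{L}_n}^{\downarrow}(u)|$ ascendant edges out of $\operatorname{N}_{\mathtt{L}_n}^{\downarrow}(u)$ is late-attached. By the tower bound above, the conditional failure probability is then at most $(c/\log n)^{m|\operatorname{N}_{\mathtt{L}_n}^{\downarrow}(u)|}$, and the task reduces to showing that $|\operatorname{N}_{\mathtt{L}_n}^{\downarrow}(u)|\geq \log n$ with overwhelming probability. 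For this, I would adapt the BFS growth analysis in the proof of Lemma~\ref{lem-size-of-neighborhood}: the ascendant BFS from $u$ multiplies by roughly $m$ at each step as long as collisions with previously-visited late vertices are rare. Using Lemma~\ref{lem-edgeset-cond-prob} with conditioning set size $|E|=O(\log n)$, each new ascendant edge collides with a fixed already-visited late vertex with probability $O(\log n/n)$; summing over $O(\log n)$ possible targets and $O(\log n)$ edge revelations in $r^{*}=O(\log\log n)$ BFS steps, the probability of having two or more collisions is at most $O((\log n)^{6}/n^{2})$. With at most one collision, $|\operatorname{N}_{r^{*}}^{\downarrow}(u)|\geq m^{r^{*}}-1\geq \log n$, so the remaining bad event has conditional probability at most $(c/\log n)^{m\log n} = n^{-\omega(1)}$. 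Combining yields per-vertex failure probability $\ll n^{-2}$, and the union bound completes the proof.

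The main technical obstacle is the joint control of (i) the collision probability, which degrades through the $|E|$ term in Lemma~\ref{lem-edgeset-cond-prob} as the BFS grows, and (ii) the neighborhood size needed to exploit the low late-weight estimate. This is resolved by stopping the geometric growth phase early at $r^{*}=O(\log\log n)$, where both the conditioning set and the visited vertex set remain of polylogarithmic size, rather than pushing the BFS all the way to depth $\mathtt{L}_n$; the extra slack between $\log\log n$ and $\mathtt{L}_n=(\log n)^{2/3}$ is precisely what makes the $(c/\log n)^{m\log n}$ bound much stronger than $n^{-2}$.
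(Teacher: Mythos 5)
Your approach is essentially the paper's: run an ascendant BFS from each late vertex $u$, observe that the bad event $\{\operatorname{dist}_{G_n}(u,\llb 1,n-2\mathtt K_n\rrb)>\mathtt L_n+1\}$ forces many BFS edges to land in the late set $\llb n-2\mathtt K_n+1,n\rrb$, and multiply per-edge conditional bounds of order $1/\log n$. The differences from the paper are modest: (a) the paper avoids re-proving neighborhood growth by invoking $\mathcal G_0$ (which the lemma statement lets it condition on), yielding $\mathtt M=(\log n)^3$ BFS vertices at small depth for free, while you re-establish growth via a collision argument stopped at $r^*=O(\log\log n)$ — more work but self-contained and giving the sharper radius $r^*+1$; (b) the paper gets the per-step $O(1/\log n)$ bound directly from Lemma~\ref{lem-edgeset-cond-prob} with $E'=\{(u_k,1,w): n-2\mathtt K_n<w<u_k\}$ and $E=H_{k-1}$, whereas you appeal to a ``low late-weight plus tower'' argument.

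There is a genuine (though easily patched) gap in step (b) as you state it. Your tower bound is phrased for a \emph{specified} collection of edges revealed in temporal order, using the fact that $\PA[\,v_{t,i}\text{ is late}\mid G_{t,i-1}\,]\le c/\log n$. But the BFS edges are not a specified collection: the identity of the $k$-th BFS vertex $u_k$ and hence the time stamp $(u_k,i)$ of its attachment edges is itself determined by the BFS history $H_{k-1}$, and $H_{k-1}$ mixes edges whose creation times are both earlier and later than $(u_k,i)$. Conditioning on $H_{k-1}$ is therefore not conditioning on a sub-$\sigma$-algebra of $\sigma(G_{u_k,i-1})$, so the low-late-weight estimate does not transfer directly to the conditional probability $\PA[\,u_k\text{ attaches only to late vertices}\mid H_{k-1}\,]$. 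The correct tool is precisely Lemma~\ref{lem-edgeset-cond-prob}, which with $|E'|\le 2\mathtt K_n$, $s=n-2\mathtt K_n+1$, and $|E|=|E(H_{k-1})|=O(\log n)$ gives exactly the needed $O(\mathtt K_n/n)=O(1/\log n)$ bound, uniformly over the BFS conditioning. You already invoke this lemma for the collision estimate; invoking it here as well closes the gap and recovers the paper's argument.
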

\begin{proof}
    Fix any $n-2\mathtt{K}_n+1\leq u \leq n$ and perform BFS in $\operatorname{N}_{\mathtt{L}_n}^\downarrow(u)$. Suppose that the first $\mathtt M=(\log n)^3$ vertices are $u_1,\ldots,u_{\mathtt M}$ (we list them in BFS order). Under $\mathcal E_0$ we have $\operatorname{dist}_{G_n}(u_i,u)\leq \mathtt{L}_n$. Thus, we see that
    \begin{align*}
        &\PA_n^{(m,\delta)}\big[ \operatorname{dist}(u,\llb 1,n-2\mathtt{K}_n \rrb) \geq \mathtt{L}_n+1; \mathcal G_0 \big] \\
        \leq\ &\PA_n^{(m,\delta)}\big[ u_k>n-2\mathtt{K}_n, u_k \mbox{ does not attach to } \llb 1,n-2\mathtt{K}_n \rrb \mbox{ for all } 1 \leq k \leq \mathtt M \big] \,.
    \end{align*}
    In addition, denote $H_k$ to be the graph induced by all the attachment edges of $u,u_1,\ldots,u_k$. Then we have $H_k \subsetneq H_{k+1}, |E(H_k)|=m(k+1)$ and $u_{k+1} \in V(H_k)$ is determined by $H_k$. %\zh{Do you mean that given $H_k$, the probability that $u_{k+1}=v$ is independent of $H_0,\ldots,H_{k-1}$?} \ZL{I mean that once you know all $(\ell,i,j)\in H_k$ there is an unique choice of $u_{k+1}$ (which is the vertex that has the first BFS order in $V(H_k) \setminus \{ u_1,\ldots,u_k \}$)}. 
    Note that conditioned on any realization $H_{k-1}$ such that $u_i$ does not attach to $\llb 1,n-2\mathtt{K}_n \rrb$ for all $1 \leq i \leq k-1$, we have
    \begin{align*}
        &\PA_n^{(m,\delta)}\big[ u_k \mbox{ does not attach to } \llb 1,n-2\mathtt{K}_n \rrb \mid H_{k-1} \big] \\
        \leq\ &\PA_n^{(m,\delta)}\Big[ \big( \cup_{n-2\mathtt{K}_n+1 \leq w \leq u_k} \{ (u_k,1,w) \} \big) \cap E(G_n) \neq \emptyset \mid H_{k-1} \subset E(G_n) \Big] \\
        \leq\ &\frac{2\mathtt{K}_n(m+\delta+1)+|E(H_{k-1})|}{(2(n-2\mathtt{K}_n)-2)m+(n-2\mathtt{K}_n)\delta} \leq O(1) \cdot \tfrac{\log n}{n} \,,
    \end{align*}
    where the second inequality follows from Lemma~\ref{lem-edgeset-cond-prob}. Thus, we get that
    \begin{align*}
        \PA_n^{(m,\delta)}\big[ u_k>n-2\mathtt{K}_n, u_k \mbox{ does not attach to } \llb 1,n-2\mathtt{K}_n \rrb \mbox{ for all } 1 \leq k \leq \mathtt M \big] \leq \Big( \tfrac{O(\log n)}{n} \Big)^{\mathtt M} \leq \tfrac{1}{n^2} \,, 
    \end{align*}
    and the desired result follows from a simple union bound.
\end{proof}

We can now finish the proof of Theorem~\ref{thm-main}.

\begin{proof}[Proof of Theorem~\ref{thm-main}]
    Recall \eqref{eq-def-mathcal-G-2} and \eqref{eq-def-mathcal-G-3}. It is clear that $\mathcal G_2 \cap \mathcal G_3$ implies that $\operatorname{diam}(G_n) \leq M_n+4\mathtt{L}_n+6=(1+o(1))\log_\nu n$. Additionally, we have
    \begin{align*}
        &\PA_n^{(m,\delta)}\big[ \operatorname{diam}(G_n)>M_n+4\mathtt{L}_n+6 \big] \leq \PA_n^{(m,\delta)}[ \mathcal G_2^c \cup \mathcal G_3^c ] \\
        \leq\ &\PA_n^{(m,\delta)}[\mathcal G_0^c] + \PA_n^{(m,\delta)}[\mathcal G_1^c;\mathcal G_0] + \PA_n^{(m,\delta)}[\mathcal G_2^c;\mathcal G_0;\mathcal G_1] + \PA_n^{(m,\delta)}[\mathcal G_3^c;\mathcal G_0] =o(1) \,,
    \end{align*}
    where the second equality follows from a combination of Lemmas~\ref{lem-size-of-neighborhood}, \ref{lem-mathcal-G-is-typical}, \ref{lem-dist-small-vert} and \ref{lem-many-large-label-in-neighborhood}. This proves the desired upper-bound, and the lower bound follows from Proposition~\ref{prop-typical-distance}. 
\end{proof}

\section*{Acknowledgment}
We would like to thank Remco van der Hofstad for introducing this interesting problem.

H. Du is partially supported by an NSF-Simons research collaboration grant (award number 2031883).
S. Gong and Z. Li are partially supported by National Key R\&D program of China (No. 2023YFA1010103) and NSFC Key Program (Project No. 12231002). H. Zhu is supported by the EU Horizon 2020 programme (Marie Skłodowska-Curie grant No. 945045), the NWO Gravitation project NETWORKS (No. 024.002.003) and the National Science Foundation (No. DMS-1928930).

%\zh{There is a small issue in the references. We could possibly ignore it, but Remco prefers the sorting to be based on 'H' rather than 'r'. I tried my previous method to solve this, but it did not work.}
\small
\bibliographystyle{alpha}
\bibliography{ref}

\end{document}